\newtheorem{lemma}{Lemma}[section]
\newtheorem{corollary}{Corollary}[section]
\newtheorem{proposition}{Proposition}[section]
\newtheorem{theorem}{Theorem}[section]
\newtheorem{problem}{Problem}[section]
\newtheorem{definition}{Definition}[section]
\begin{document}

\begin{frontmatter}

\title{Further results on equivalence of multivariate polynomial matrices}

\author[mymainaddress]{Jiancheng Guan}

\author[mymainaddress]{Jinwang Liu \corref{mycorrespondingauthor}}
\cortext[mycorrespondingauthor]{Corresponding author}
\ead{jwliu64@aliyun.com}

\author[mymainaddress]{Dongmei Li}
\author[mymainaddress]{Tao Wu}

\address[mymainaddress]{School of Mathematics and Computing Science, Hunan University of Science and Technology, Xiangtan 411201, Hunan, China.}


\begin{abstract}
This paper investigates equivalence of square multivariate polynomial matrices
with the determinant being some power of a univariate irreducible polynomial.
We first generalized a global-local theorem of Vaserstein.
Then we proved these matrices are equivalent to their Smith forms by the generalized global-local theorem.
\end{abstract}

\begin{keyword}
Multivariate polynomial matrices \sep equivalence of polynomial matrices \sep Smith forms
\end{keyword}

\end{frontmatter}

\section{Introduction}

During recent decades the importance of multidimensional systems has been recognized,
due to the applications in areas such as iterative learning control systems
and image and video processing etc. (see \cite{Bose2003}).
Equivalence of systems is an important research problem in the multidimensional system theory.

A multidimensional system can be represented by a multivariate polynomial matrix.
Thus, equivalence of systems is closely related to equivalence of multivariate polynomial matrices.

It is well known that for univariate polynomial matrices, the equivalence problem has been solved.
However, for multivariate polynomial matrices, the problem is still open.
For results on equivalence of bivariate polynomial matrices, see \cite{Li2019, Zheng2023, Lu2023}.
For results on equivalence of multivariate polynomial matrices, see \cite{Lin2006, Liu2017, Li-Liang2020, Li-Liu-Chu2022, Li-Liu-Zheng2020, Lu2020}.

Liu et al.\textsuperscript{\cite{Liu2022}} investigated
equivalence of a class of triangular polynomial matrices and their Smith forms.
Let $K$ be a field.
Let $F \in K[x_1, \dots, x_n]^{l \times l}$ with ${\rm det}(F) = x_1 - f(x_2, \dots, x_n)$.
Lin et al.\textsuperscript{\cite{Lin2006}} proved that $F$ is equivalent to ${\rm diag}(1, \dots, 1, {\rm det}(F))$.
Let $F \in K[x_1, \dots, x_n]^{l \times l}$ with ${\rm det}(F) = (x_1 - f(x_2, \dots, x_n))^t$.
Li et al.\textsuperscript{\cite{Liu2017}} proved that $F$ is equivalent to ${\rm diag}(1, \dots, 1, {\rm det}(F))$
if and only if all $(l - 1) \times (l - 1)$ minors of $F$
and ${\rm det}(F)$ generate the unit ideal.
Now, let $F \in K[x_1, \dots, x_n]^{l \times l}$ with
${\rm det}(F) = (x_1 - f_1(x_2, \dots, x_n))^{t_1}(x_2 - f_2(x_3, \dots, x_n))^{t_2}$.
Li et al.\textsuperscript{\cite{Li-Liang2020}} proved that $F$ is equivalent to ${\rm diag}(1, \dots, 1, {\rm det}(F))$
if and only if all $(l - 1) \times (l - 1)$ minors of $F$ generate the unit ideal.

Let $F \in K[x, y]^{l \times l}$ with ${\rm det}(F) = p(x)^t$, where $p(x) \in K[x]$ is an irreducible polynomial.
Li et al.\textsuperscript{\cite{Li2019}} proved that $F$ is equivalent to ${\rm diag}(1, \dots, 1, p(x)^t)$ if and only if
all the $(l - 1) \times (l - 1)$ minors generate the unit ideal.
Zheng et al.\textsuperscript{\cite{Zheng2023}}
generalized this result.
They gave a necessary and sufficient condition under which $F$ is equivalent to its Smith form.
Recently, Guan et al. \textsuperscript{\cite{Guan2024}} partly generalized this result to multivariate polynomial matrices.
They gave the following problem in the paper:
\begin{problem} \label{Pro1}
Let $F \in K[x_1, \dots, x_n]^{2 \times 2}$ with ${\rm det}(F) = p$, where $p \in K[x_1]$ is an irreducible polynomial.
If $I_1(F) = K[x_1, \dots, x_n]$,
then is $F$ equivalent to ${\rm diag}(1, p)$?
\end{problem}
In this paper, we will give an affirmative answer to this problem.
Then we can completely generalize the above result by Zheng et al. to multivariate polynomial matrices.

\section{Preliminaries}

Let $K$ be a field.
$A = K[x_1, \dots, x_n]$ denotes the polynomial ring in variables $x_1, \dots, x_n$ over $K$.
$A^{l \times m}$ denotes the set of all $l \times m$ matrices with entries in $A$.
${\rm diag}(f_1, \dots, f_l)$ denotes the diagonal matrix in $A^{l \times l}$
whose diagonal elements are $f_1, \dots, f_l$, where $f_i \in A$ for $i = 1, \dots, l$.
Let $F \in A^{l \times m}$.
$d_i(F)$ denotes the greatest common divisor of all the $i \times i$ minors of $F$.
We set $d_0(F) = 1$.
${\rm GL}_l(A)$ denotes the set of all $l \times l$ unimodular matrices over $A$.

\begin{definition} [\cite{Lin2001}] \label{def:1.1}
Let $F \in A^{l \times m}$.
Then $F$ is said to be a ZLP matrix if $l \leq m$ and all the $l \times l$ minors generate unit ideal $A$.
$F$ is said to be a ZRP matrix if $l \geq m$ and all the $m \times m$ minors generate unit ideal $A$.
\end{definition}

\begin{definition} [\cite{Lin2001}] \label{def:1.2}
Let $F \in A^{l \times m}$ with rank $r > 0$.
Suppose $F$ has a factorization
\begin{equation} \label{Def1.2(1)}
F = F_1F_2
\end{equation}
where $F_1 \in A^{l \times r}, F_2 \in A^{r \times m}$.
Then (\ref{Def1.2(1)}) is said to be a ZLP factorization if $F_2$ is a ZLP matrix.
\end{definition}

\begin{definition} [\cite{Lin2001}] \label{def:1.3}
Let $F \in A^{l \times m}$ with rank $r > 0$.
Let $i$ be a fixed positive integer less than or equal to $r$.
Let $a_1, \dots, a_{\beta}$ be all the $i \times i$ minors of $F$.
Let $a_j = d_i(F) \cdot b_j$ for $j = 1, \dots ,\beta$.
Then $b_1, \dots, b_{\beta}$ are called the $i \times i$ reduced minors of $F$.
$J_i(F)$ denotes the ideal generated by $b_1, \dots, b_{\beta}$.
$I_i(F)$ denotes the ideal generated by $a_1, \dots, a_{\beta}$.
\end{definition}

\begin{definition} \label{def:1.4}
Let $F, G \in A^{l \times m}$.
Then $F$ is said to be equivalent to $G$
if there exist $U \in {\rm GL}_l(A), V \in {\rm GL}_m(A)$ such that $F = UGV$.
\end{definition}

If $F$ and $G$ is equivalent, we will write $F \sim G$.

\begin{definition} \label{def:1.5}
Let $F \in A^{l \times m}$ with rank $r > 0$.
Let
$$
\Phi_i =
\left\{
\begin{array}{ll}
d_i(F) / d_{i - 1}(F), & 1 \leq i \leq r;\\
0, & r < i < {\rm min}\{l, m\}.\\
\end{array}
\right.
$$
Then the Smith form of $F$ is
$$
S =
\left(
\begin{array}{cc}
{\rm diag}(\Phi_1, \dots, \Phi_r) & 0_{r \times (m - r)}\\
0_{(l - r) \times r} & 0_{(l - r) \times (m - r)}
\end{array}
\right)
$$
\end{definition}

\section{Main results}

Let $B$ be a commutative ring, and $S$ be a multiplicative set in $B$.
Let $f = \Sigma a_ix^i \in B[x]$.
Let $g = \Sigma (a_i / 1)x^i \in B_S[x]$.
Then we say that $f$ localizes to $g$.
Let $F = (f_{ij})_{l \times m} \in B[x]^{l \times m}$.
Let $f_{ij}$ localize to $g_{ij} \in B_S[x]$.
Let $G = (g_{ij})_{l \times m} \in B_S[x]^{l \times m}$.
Then we say that $F$ localizes to $G$.
Let $a / 1 \in B_S$.
If $a / 1 = 0$, then there exists $s \in S$ such that $sa = 0$.
Thus, if $f$ localizes to 0, then there exists $s \in S$ such that $sf = 0$.
Similarly, if $F$ localizes to 0, then there exists $s \in S$ such that $sF = 0$.
The following lemma is Lemma 2.2 on page 104 of \cite{Lam2006}.

\begin{lemma} \label{Lem 1}
Let $B$ be a commutative ring, and $S$ be a multiplicative set in $B$.
Let $\tau (x) \in {\rm GL}_n(B_S[x])$ be such that $\tau (0) = I_n$.
Then there exists a matrix $\hat{\tau} (x) \in {\rm GL}_n(B[x])$
such that $\hat{\tau} (x)$ localizes to $\tau ((s / 1)x)$ (for some $s \in S$), and $\hat{\tau} (0) = I_n$.
\end{lemma}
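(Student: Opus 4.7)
The approach is to lift $\tau(sx)$ to a matrix over $B[x]$ for a suitably chosen $s \in S$, and simultaneously lift $\tau(sx)^{-1}$, then pick $s$ large enough that the two lifts multiply to $I_n$ on the nose, not merely after localization.

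First, since $\tau(0) = \tau(0)^{-1} = I_n$, I expand $\tau(x) = I_n + \sum_{k=1}^d b_k x^k$ and $\tau(x)^{-1} = I_n + \sum_{k=1}^d c_k x^k$ with $b_k, c_k \in B_S^{n\times n}$; the relation $\tau(x)\tau(x)^{-1}=I_n$ gives $\sum_{i+j=k} b_i c_j = 0$ for every $k\geq 1$ (with the convention $b_0=c_0=I_n$). There are only finitely many entries among the $b_k$ and $c_k$, so I pick $s_1\in S$ common to all their denominators and choose integral lifts $B_k\in B^{n\times n}$ of $s_1^k b_k$ and $C_k\in B^{n\times n}$ of $s_1^k c_k$; this is legitimate for $k\geq 1$ because $s_1^k b_k = s_1^{k-1}(s_1 b_k)$ already lies in the image of $B^{n\times n}$ in $B_S^{n\times n}$.

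Put $\hat{\tau}_0(x):=I_n+\sum_{k\geq 1} B_k x^k$ and $\hat{\sigma}_0(x):=I_n+\sum_{k\geq 1} C_k x^k$, which localize respectively to $\tau(s_1 x)$ and $\tau(s_1 x)^{-1}$ and have constant term $I_n$. Their product $\hat{\tau}_0\hat{\sigma}_0=I_n+\sum D_k x^k$ has image $I_n$ in $B_S[x]^{n\times n}$, so every $D_k$ is $S$-torsion; pick $s^*\in S$ annihilating all finitely many $D_k$. Set $s:=s_1 s^*$ and rescale to $\tilde{B}_k:=(s^*)^k B_k$, $\tilde{C}_k:=(s^*)^k C_k$, which still lift $s^k b_k$ and $s^k c_k$. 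Then $\hat{\tau}(x):=I_n+\sum \tilde{B}_k x^k$ localizes to $\tau(sx)=\tau((s/1)x)$ and satisfies $\hat{\tau}(0)=I_n$. A direct computation shows that the $x^k$-coefficient of $\hat{\tau}(x)\cdot(I_n+\sum \tilde{C}_j x^j)$ equals $(s^*)^k D_k = (s^*)^{k-1}(s^* D_k) = 0$ for every $k\geq 1$, hence $\hat{\tau}\in\mathrm{GL}_n(B[x])$.

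The main obstacle is precisely the invertibility of $\hat{\tau}$ in $B[x]$, not merely in $B_S[x]$: naively clearing denominators yields a lift whose image is a unit, but the kernel of $B[x]\to B_S[x]$ can prevent the lift itself from being invertible. The two-stage choice of $s$---first $s_1$ for denominators, then $s^*$ for the $S$-torsion in the product---together with the rescaling $B_k\mapsto (s^*)^k B_k$ (balanced by the corresponding rescaling of the $C_k$) is exactly what converts the torsion identity $s^* D_k = 0$ into the exact matrix identity needed for $\hat{\tau}\in\mathrm{GL}_n(B[x])$.
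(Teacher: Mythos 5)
Your proof is correct. Note that the paper itself gives no proof of this lemma --- it simply cites Lemma 2.2 on page 104 of Lam's \emph{Serre's Problem on Projective Modules} --- and your argument is the standard one found there: lift $\tau(s_1x)$ and $\tau(s_1x)^{-1}$ after clearing denominators, observe that the coefficients $D_k$ ($k\geq 1$) of the product are $S$-torsion, choose $s^*$ killing them, and rescale both lifts by $(s^*)^k$ in degree $k$ so that the identity $\sum_{i+j=k}\tilde B_i\tilde C_j=(s^*)^kD_k=0$ holds exactly in $B[x]$, giving $\hat\tau\in\mathrm{GL}_n(B[x])$ via the ``one-sided inverse implies two-sided'' fact for square matrices over a commutative ring. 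The one small stylistic issue is the implicit assumption that $\tau(x)$ and $\tau(x)^{-1}$ share a common degree bound $d$; this is harmless (take $d$ to be the maximum of the two degrees), but worth stating, as is the remark that finitely many torsion elements admit a common annihilator in $S$ by taking a product.
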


\begin{proposition} \label{Pro1}
Let $R$ be a commutative ring, and $S$ be a multiplicative set in $R$.
For $F \in R[t]^{l \times m}$, the following statements are equivalent:
\begin{itemize}
\item[(1)] $F(t) \sim F(0)$ over $R_S[t]$;
\item[(2)] there exists $b \in S$ such that $F(t + bx) \sim F(t)$ over $R[t, x]$.
\end{itemize}
\end{proposition}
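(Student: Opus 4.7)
The plan is to prove the two implications separately, with $(2) \Rightarrow (1)$ a direct substitution and $(1) \Rightarrow (2)$ the substantive direction using Lemma~\ref{Lem 1}.

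For $(2) \Rightarrow (1)$, I would use a direct specialization argument. Given an equivalence $F(t+bx) = U(t,x) F(t) V(t,x)$ over $R[t,x]$ with $b \in S$, the element $-t/b$ makes sense in $R_S[t]$, so the substitution $x \mapsto -t/b$ defines a ring homomorphism $R[t,x] \to R_S[t]$. It sends $F(t+bx)$ to $F(0)$ while preserving invertibility of $U$ and $V$, yielding $F(0) = U(t,-t/b)\,F(t)\,V(t,-t/b)$ over $R_S[t]$, which is the desired equivalence.

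For $(1) \Rightarrow (2)$, my plan is to lift a local equivalence via Lemma~\ref{Lem 1} and then absorb the resulting $S$-torsion into the parameter $b$. Writing $F(t) = P(t) F(0) Q(t)$ with $P \in {\rm GL}_l(R_S[t])$ and $Q \in {\rm GL}_m(R_S[t])$, I would introduce
\[
M(t,x) := P(t+x)\,P(t)^{-1}, \qquad N(t,x) := Q(t)^{-1}\,Q(t+x),
\]
both equal to the identity at $x=0$ and satisfying $M(t,x)\,F(t)\,N(t,x) = F(t+x)$ in $R_S[t,x]$. Viewing these as matrices over $(R[t])_S[x]$, Lemma~\ref{Lem 1} with base ring $B = R[t]$ produces lifts $\hat M(t,x) \in {\rm GL}_l(R[t,x])$ and $\hat N(t,x) \in {\rm GL}_m(R[t,x])$, each identity at $x=0$, whose localizations are $M(t, s_1 x)$ and $N(t, s_2 x)$ for some $s_1, s_2 \in S$. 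Replacing $\hat M(t,x)$ by $\hat M(t, s_2 x)$ and $\hat N(t,x)$ by $\hat N(t, s_1 x)$ synchronizes the parameters to a common $s := s_1 s_2 \in S$.

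The identity $\hat M(t,x)\,F(t)\,\hat N(t,x) = F(t+sx)$ then holds in $R_S[t,x]$, so the discrepancy $D(t,x) \in R[t,x]^{l \times m}$ lies in the kernel of localization; since $D$ has only finitely many coefficients, a single $s'' \in S$ annihilates them all. The decisive point is that $D(t,0) = 0$, because both lifts reduce to the identity at $x=0$, so $D(t,x) = \sum_{k \geq 1} D_k(t)\,x^k$ with $s''\,D_k = 0$ for every $k \geq 1$. Substituting $x \mapsto s''\,x$ therefore kills each term, $D(t, s''x) = \sum_{k \geq 1} (s'')^k D_k(t)\,x^k = 0$, and yields the required identity $\hat M(t,s''x)\,F(t)\,\hat N(t,s''x) = F(t + s\,s''\,x)$ in $R[t,x]$ with $b := s\,s'' \in S$.

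The main obstacle is exactly this gap between an $R_S[t,x]$-level identity produced by Lemma~\ref{Lem 1} and the $R[t,x]$-level identity demanded by (2), a gap that can genuinely be nonzero in the presence of $S$-torsion in $R$. The substitution $x \mapsto s''x$, which converts a torsion-valued error vanishing at $x=0$ into an honest zero, is the step that closes this gap and dictates why $b$ must belong to $S$ rather than being taken to be $1$.
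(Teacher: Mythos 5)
Your proof is correct and takes essentially the same route as the paper: a direct substitution for $(2)\Rightarrow(1)$, and for $(1)\Rightarrow(2)$ the same construction of translation cocycles trivial at $x=0$, lifted via Lemma~\ref{Lem 1} over $B=R[t]$, with a final rescaling $x\mapsto s''x$ to kill the $S$-torsion discrepancy. The only differences are cosmetic: you specialize $x\mapsto -t/b$ rather than $t\mapsto 0,\ x\mapsto b^{-1}t$, you write the equivalence as $M(t,x)F(t)N(t,x)=F(t+x)$ rather than $\tau_2 F(t+x)\tau_1=F(t)$, and you expand the discrepancy coefficientwise rather than first factoring out $x$, but the underlying argument is identical.
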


\begin{proof}
(2) $\Rightarrow$ (1).
Suppose that $U(t, x)F(t + bx)V(t, x) = F(t)$,
where $U(t, x) \in {\rm GL}_l(R[t, x]), V(t, x) \in {\rm GL}_m(R[t, x])$.
Let $t \mapsto 0, x \mapsto b^{-1}t$.
Then $U(0, b^{-1}t)F(0 + bb^{-1}t)V(0, b^{-1}t) = F(0)$.
Since $U(0, b^{-1}t) \in {\rm GL}_l(R_S[t]), V(0, b^{-1}t) \in {\rm GL}_m(R_S[t])$,
we have $F(t) \sim F(0)$ over $R_S[t]$.

(1) $\Rightarrow$ (2).
Take $\sigma_1(t) \in {\rm GL}_m(R_S[t]), \sigma_2(t) \in {\rm GL}_l(R_S[t])$
such that $\sigma_2(t) F(t) \sigma_1(t) = F(0)$.
Let $\tau_1 (t, x) = \sigma_1(t + x)\sigma_1(t)^{-1}, \tau_2 (t, x) = \sigma_2(t)^{-1} \sigma_2(t + x)$.
Then $\tau_1 (t, x) \in {\rm GL}_m(R_S[t, x]), \tau_2 (t, x) \in {\rm GL}_l(R_S[t, x])$.
Thus
\begin{equation} \label{equation1}
\begin{aligned}
\tau_2 (t, x) F(t + x) \tau_1 (t, x) &= \sigma_2(t)^{-1} \sigma_2(t + x) F(t + x) \sigma_1(t + x)\sigma_1(t)^{-1}\\
                                     &= \sigma_2(t)^{-1} F(0) \sigma_1(t)^{-1}\\
                                     &= F(t) \quad ({\rm over} \  R_S[t, x])\\
\end{aligned}
\end{equation}
Since $\tau_1 (t, 0) = \sigma_1(t) \sigma_1(t)^{-1} = I_m, \tau_2 (t, 0) = \sigma_1(t)^{-1} \sigma_1(t) = I_l$,
we can apply Lemma \ref{Lem 1} over $B = R[t]$.
By this lemma, we can find $\hat{\tau_1} (t, x) \in {GL}_m(R[t, x])$
that localizes to $\tau_1(t, (s_1/1)x)$ for some $s_1 \in S$, and $\hat{\tau_1}(t, 0) = I_m$.
Similarly, we can find $\hat{\tau_2} (t, x) \in {GL}_l(R[t, x])$
that localizes to $\tau_2(t, (s_2/1)x)$ for some $s_2 \in S$, and $\hat{\tau_2}(t, 0) = I_l$.
Let $s = s_1s_2$.
Then $\hat{\tau_1} (t, s_2x) \in {GL}_m(R[t, x])$ localizes to $\tau_1(t, (s/1)x)$.
Similarly, $\hat{\tau_2} (t, s_1x) \in {GL}_l(R[t, x])$ localizes to $\tau_2(t, (s/1)x)$.
Let $H(t, x) = \hat{\tau_2} (t, s_1x) F(t + sx) \hat{\tau_1} (t, s_2x) - F(t)$.
In $H(t, x)$, let $x \mapsto 0$.
Then $H(t, x) \mapsto F(t) - F(t) = 0$.
Thus, $H(t, x) = xG(t, x)$ for some $G(t, x) \in R[t, x]^{l \times m}$.
Since $H(t, x)$ localizes to $\tau_2 (t, (s/1)x) F(t + (s/1)x) \tau_1 (t, (s/1)x) - F(t) = 0$,
we have $H(t, x)$ localizes to 0.
Thus, $G(t, x)$ also localizes to 0.
Then there exists $s' \in S$ such that $s'G(t, x) = 0$.
Therefore, $\hat{\tau_2} (t, s_1s'x) F(t + ss'x) \hat{\tau_1} (t, s_2s'x) - F(t) = s'xG(t, s'x) = 0$.
Then $F(t + ss'x) \sim F(t)$.
\end{proof}

Let $I$ be a ideal in $R$.
We denote the radical of $I$ by ${\rm rad}I$.
The proofs of the following two theorems are almost the same as them of theorem 2.4 and theorem 2.5 on page 105 of \cite{Lam2006}, respectively.
Hence, the proofs are omitted here.

\begin{theorem}
Let $R$ be a commutative ring, and $F \in R[t]^{l \times m}$.
Let
\begin{center}
$I_1 = \{ a \in R | F(t) \sim F(0)$ over $R_a[t] \}$,\\
$I_2 = \{ b \in R | F(t + bx) \sim F(t)$ over $R[t, x] \}.$
\end{center}
Then $I_1, I_2$ are ideals in $R$, with $I_1 = {\rm rad}I_2$.
\end{theorem}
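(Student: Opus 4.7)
The plan is to prove $I_2$ is an ideal directly from the definition, then obtain $I_1 = \mathrm{rad}\,I_2$ as an immediate consequence of Proposition~\ref{Pro1}, which will force $I_1$ to be an ideal as well.

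For $I_2$, note that $0 \in I_2$ trivially since $F(t+0\cdot x)=F(t)$. For closure under scaling, suppose $b \in I_2$, witnessed by unimodular $U(t,x) \in \mathrm{GL}_l(R[t,x])$ and $V(t,x) \in \mathrm{GL}_m(R[t,x])$ with $U(t,x)F(t+bx)V(t,x)=F(t)$. Given $r \in R$, I would substitute $x \mapsto rx$: the matrices $U(t,rx)$ and $V(t,rx)$ remain unimodular (substitution preserves invertibility since $UU^{-1}=I$ is preserved), and the equation becomes $U(t,rx)F(t+(rb)x)V(t,rx)=F(t)$, so $rb \in I_2$. This also gives $-b \in I_2$.

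For closure under addition, suppose $b_1, b_2 \in I_2$ with witnesses $U_i(t,x), V_i(t,x)$ satisfying $U_i(t,x)F(t+b_i x)V_i(t,x)=F(t)$. The key idea is to chain the two equivalences by substituting $t \mapsto t + b_1 x$ in the witness for $b_2$, which is legal because $b_1 \in R$ and the result stays in $R[t,x]$:
\[
U_2(t+b_1x,x)\, F\bigl(t+b_1x+b_2x\bigr)\, V_2(t+b_1x,x) = F(t+b_1x).
\]
Combining with $U_1(t,x)F(t+b_1x)V_1(t,x)=F(t)$ gives $F(t+(b_1+b_2)x)\sim F(t)$, hence $b_1+b_2 \in I_2$. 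Thus $I_2$ is an ideal.

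For the identity $I_1 = \mathrm{rad}\,I_2$, I would apply Proposition~\ref{Pro1} to the multiplicative set $S = \{1, a, a^2, \dots\}$, so that $R_S = R_a$. The proposition then says $F(t) \sim F(0)$ over $R_a[t]$ if and only if there exists some power $a^n \in S$ with $F(t + a^n x) \sim F(t)$ over $R[t,x]$, that is, $a^n \in I_2$. Hence $a \in I_1$ iff $a \in \mathrm{rad}\,I_2$. Since the radical of an ideal is an ideal, $I_1$ is automatically an ideal. The only genuinely delicate step is the additivity of $I_2$, and the substitution $t \mapsto t+b_1x$ handles it cleanly; everything else reduces to either a substitution argument or a direct citation of Proposition~\ref{Pro1}.
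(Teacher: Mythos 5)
Your proof is correct and follows the same route as the cited reference (Lam's Theorem 2.4 via the preceding proposition), which is exactly what the paper points to when it omits the argument. The two substantive steps — proving additivity of $I_2$ by the substitution $t \mapsto t + b_1 x$ (an $R[x]$-algebra automorphism of $R[t,x]$, hence preserving unimodularity) and then reading off $I_1 = \mathrm{rad}\, I_2$ by applying Proposition~\ref{Pro1} to $S = \{1, a, a^2, \dots\}$ so that $R_S = R_a$ — are the standard Vaserstein-style argument. One small point worth making explicit: the proposition yields $a^n \in I_2$ for some $n \ge 0$; the case $n = 0$ forces $1 \in I_2$, hence $I_2 = R$ and the radical equality holds trivially, so nothing is lost.
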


\begin{theorem} \label{Thm2}
Let $R$ be a commutative ring, and $F \in R[t]^{l \times m}$.
If $F(t) \sim F(0)$ over $R_{m_1}[t]$ for all maximal ideals $m_1$ in $R$,
then $F(t) \sim F(0)$ over $R[t]$.
\end{theorem}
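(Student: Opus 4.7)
The plan is to use Proposition~\ref{Pro1} together with the ideal $I_2$ from the preceding theorem, and to reduce the statement to the standard fact that an ideal of $R$ contained in no maximal ideal is the unit ideal.

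First, I would fix an arbitrary maximal ideal $m_1 \subseteq R$ and apply Proposition~\ref{Pro1} with the multiplicative set $S = R \setminus m_1$, so that $R_S = R_{m_1}$. The hypothesis that $F(t) \sim F(0)$ over $R_{m_1}[t]$ is exactly condition (1) of that proposition; condition (2) then furnishes some $b \in R \setminus m_1$ with $F(t + bx) \sim F(t)$ over $R[t,x]$. By the definition of $I_2$ this says $b \in I_2$ and $b \notin m_1$, so $I_2 \not\subseteq m_1$. Since $m_1$ was arbitrary, $I_2$ is contained in no maximal ideal, hence $I_2 = R$, and in particular $1 \in I_2$.

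By the definition of $I_2$, the fact that $1 \in I_2$ provides matrices $U(t,x) \in {\rm GL}_l(R[t,x])$ and $V(t,x) \in {\rm GL}_m(R[t,x])$ satisfying $U(t,x)\,F(t+x)\,V(t,x) = F(t)$. The substitution homomorphism $R[t,x] \to R[t]$ sending $t \mapsto 0$ and $x \mapsto t$ carries units to units, so $U(0,t) \in {\rm GL}_l(R[t])$ and $V(0,t) \in {\rm GL}_m(R[t])$; applying it to the above equation yields $U(0,t)\,F(t)\,V(0,t) = F(0)$, which is the desired equivalence $F(t) \sim F(0)$ over $R[t]$.

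The real content of the theorem has already been absorbed into Proposition~\ref{Pro1} (the delicate lifting from $R_S[t]$ to $R[t,x]$) and into the preceding theorem (the fact that $I_2$ is an ideal). Once those are in hand the argument is a formal Quillen-style local-to-global patching, and there is no substantive obstacle remaining beyond verifying that substitution preserves unimodularity.
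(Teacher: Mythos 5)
Your argument is exactly the Quillen patching proof that Lam gives for Theorem 2.5 on page 105 of \cite{Lam2006}, which is the proof the paper invokes and omits. You correctly isolate the two ingredients needed --- Proposition~\ref{Pro1} to produce an element of $I_2$ outside each given maximal ideal, and the fact that $I_2$ is an ideal so that containment in no maximal ideal forces $I_2 = R$ --- and the final substitution $t \mapsto 0$, $x \mapsto t$ is handled correctly.
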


\begin{lemma} \label{Lem2}
Let $\theta$ be a $K-$algebra automorphism of $A$.
Let $F \in A^{2 \times 2}$.
If $F \sim
\left(
\begin{array}{cc}
1 & \\
  & {\rm det}(F)\\
\end{array}
\right)
$,
then $\theta(F) \sim
\left(
\begin{array}{cc}
1 & \\
  & {\rm det}(\theta(F))\\
\end{array}
\right)
$.
\end{lemma}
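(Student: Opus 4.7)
The proof plan is essentially a direct application of functoriality: a $K$-algebra automorphism $\theta$ of $A$ extends entrywise to matrices, preserves the determinant, and carries unimodular matrices to unimodular matrices.

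First I would unfold the hypothesis: by Definition~\ref{def:1.4} there exist $U, V \in \mathrm{GL}_2(A)$ with
\[
F \;=\; U \cdot \mathrm{diag}(1, \det(F)) \cdot V.
\]
Then I would apply $\theta$ entrywise to both sides. Since $\theta$ is a ring homomorphism, it commutes with matrix multiplication, so
\[
\theta(F) \;=\; \theta(U) \cdot \theta\bigl(\mathrm{diag}(1, \det(F))\bigr) \cdot \theta(V) \;=\; \theta(U) \cdot \mathrm{diag}(1, \theta(\det(F))) \cdot \theta(V).
\]

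Next I would verify the two small facts that make this work. First, because the determinant is a polynomial expression in the entries of $F$ and $\theta$ is a ring homomorphism, $\theta(\det(F)) = \det(\theta(F))$. Second, because $\theta$ is an automorphism, $\theta(U) \cdot \theta(U^{-1}) = \theta(UU^{-1}) = \theta(I_2) = I_2$, so $\theta(U) \in \mathrm{GL}_2(A)$, and likewise for $\theta(V)$. Substituting $\det(\theta(F))$ for $\theta(\det(F))$ in the displayed equality then gives exactly $\theta(F) \sim \mathrm{diag}(1, \det(\theta(F)))$.

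There is no real obstacle here; the lemma is a bookkeeping statement that records the stability of the conclusion ``$F$ is equivalent to its Smith form of the stated shape'' under $K$-algebra automorphisms of $A$. The only thing to be slightly careful about is that $\theta$ acts entrywise on matrices and is compatible with matrix multiplication and determinant, both of which are immediate because $\theta$ is a $K$-algebra homomorphism and the relevant operations are polynomial.
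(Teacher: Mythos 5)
Your proof is correct and follows essentially the same route as the paper: apply $\theta$ entrywise to the equivalence relation, use that $\theta$ is a ring homomorphism to commute it past matrix multiplication and the determinant, and that $\theta$ is an automorphism to preserve unimodularity. The only cosmetic difference is that the paper starts from $UFV = \mathrm{diag}(1,\det F)$ rather than the rearranged $F = U\,\mathrm{diag}(1,\det F)\,V$, and it leaves the verification that $\theta(U),\theta(V)\in\mathrm{GL}_2(A)$ implicit, which you spell out.
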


\begin{proof}
Let $UFV = {\rm diag}(1, {\rm det}(F))$,
where $U, V \in {\rm GL}_2(A)$.
Then $\theta(U)\theta(F)\theta(V) = {\rm diag}(1, \theta({\rm det}(F)))$.
Since $\theta({\rm det}(F)) = {\rm det}(\theta(F))$,
we are done.
\end{proof}

Let $p \in K[x_1]$ is an irreducible polynomial.
Let $\overline{A} = (K[x_1] / \langle p \rangle)[x_2, \dots, x_n]$.
Let $\pi: A \rightarrow \overline{A}$ be the natural ring homomorphism.
Let $F = (f_{ij})_{l \times m} \in A^{l \times m}$.
We denote $(\pi(f_{ij}))_{l \times m}$ by $\overline{F}$.

\begin{theorem} \label{Thm3}
Let $F \in A^{2 \times 2}$ with ${\rm det}(F) = p$,
where $p \in K[x_1]$ is an irreducible polynomial.
If $I_1(F) = A$,
then $F \sim
\left(
\begin{array}{cc}
1 & \\
  & p\\
\end{array}
\right)$.
\end{theorem}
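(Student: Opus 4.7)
The plan is to induct on the number of variables $n$, using Theorem~\ref{Thm2} as the local--global reduction at each step.

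\textbf{Base case $n = 1$.} Here $A = K[x_1]$ is a PID. The hypothesis $I_1(F) = A$ forces the gcd of the entries of $F$ to be a unit, whence $d_1(F) = 1$ and $d_2(F)/d_1(F) = p$. The Smith form of $F$ is therefore ${\rm diag}(1, p)$, so $F \sim {\rm diag}(1, p)$.

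\textbf{Inductive step $n \geq 2$.} Write $A = R[x_n]$ with $R = K[x_1, \ldots, x_{n-1}]$, and set $F_0 := F(x_1, \ldots, x_{n-1}, 0) \in R^{2 \times 2}$. Since $p$ is independent of $x_n$ we still have ${\rm det}(F_0) = p$, and since evaluation at $x_n = 0$ preserves the unit ideal, $I_1(F_0) = R$. By the inductive hypothesis applied to $F_0$ (one fewer variable), $F_0 \sim {\rm diag}(1, p)$ over $R$, hence over $A$. It therefore suffices to prove $F \sim F_0$ over $A$.

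Applying Theorem~\ref{Thm2} with $t = x_n$, it is enough to verify $F \sim F_0$ over $R_m[x_n]$ for every maximal ideal $m$ of $R$. Fix such an $m$. If $p \notin m$, then $p$ is a unit in $R_m$ and hence in $R_m[x_n]$, so both $F$ and $F_0$ lie in ${\rm GL}_2(R_m[x_n])$ and $F \sim F_0 \sim I_2$ is immediate.

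The remaining case $p \in m$ is the main obstacle. Reducing modulo $mR_m$ yields $2 \times 2$ matrices $\widetilde F$ and $\widetilde F_0$ over the PID $(R/m)[x_n]$; both have determinant zero and entries generating the unit ideal, so each has Smith form ${\rm diag}(1, 0)$, and in particular $\widetilde F \sim \widetilde F_0$ over $(R/m)[x_n]$. My strategy for lifting this residue-field equivalence up to $R_m[x_n]$ would be to exploit $I_1(F) = A$, together with the $K$-algebra automorphism flexibility granted by Lemma~\ref{Lem2} to arrange a convenient coordinate if necessary, to perform a sequence of elementary row and column operations over $R_m[x_n]$ that normalize $F$ to a block form with $1$ in the $(1,1)$ position and an associate of $p$ in the $(2,2)$ position, and similarly for $F_0$. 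The delicate step, which is the crux of the argument, is ensuring that these normalizing operations admit unimodular lifts all the way over $R_m[x_n]$ rather than only over the residue field; this is where the finer structure developed in Proposition~\ref{Pro1} and Theorem~\ref{Thm2}, combined with the hypothesis $I_1(F) = A$, must be brought to bear to close the gap and conclude $F \sim F_0$ over $R_m[x_n]$, thereby completing the induction.
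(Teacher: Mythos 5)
Your overall skeleton is the same as the paper's: fix a maximal ideal $m$ of $R = K[x_1,\ldots,x_{n-1}]$, dispose of the easy case $p \notin m$, and invoke Theorem~\ref{Thm2} to glue the local equivalences into $F \sim F(0)$ over $R[x_n]$, then close by induction on $n$. The base case and the $p \notin m$ case are fine. But the case $p \in m$ is the entire content of the theorem, and there you do not actually give an argument: you acknowledge that the ``delicate step'' of producing unimodular matrices over $R_m[x_n]$ (not merely over the residue field) is the crux and then leave it as a hope that Proposition~\ref{Pro1}, Theorem~\ref{Thm2}, and $I_1(F)=A$ ``must be brought to bear.'' Reducing modulo $mR_m$ and observing that $\widetilde F$ and $\widetilde F_0$ have the same Smith form over the PID $(R/m)[x_n]$ gives you nothing toward lifting: equivalence over a residue field does not lift along a local ring of Krull dimension $>0$ without further structure, and indeed exhibiting that structure is exactly what is missing.

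The paper's proof fills this gap with three concrete ingredients that your sketch never produces. First, since $I_1(F) = A$, the reduction $\overline F$ over $\overline A = (K[x_1]/\langle p\rangle)[x_2,\ldots,x_n]$ has rank $1$ and $I_1(\overline F)=\overline A$, so by Wang--Feng it admits a ZLP factorization $\overline F = G_1 G_2$; this is lifted to $F_1 = (u_1,u_2)^t$, $F_2 = (u_3,u_4)$ in $A$ with $\langle u_1,u_2,p\rangle = A$. Second, a $K[x_1]$-algebra automorphism $\theta$ of the form $x_i \mapsto x_i + x_n^{r_i}$ (a Noether-normalization shift, not the generic flexibility you invoke via Lemma~\ref{Lem2}) is chosen so that the leading coefficient of $\theta(u_1)$ in $x_n$ is some $f(x_1)$ with $p \nmid f$; this is what makes $f$ a unit in $R_m$ when $p \in m$. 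Third, Lam's Lemma 1.1 (the one on ideals in $R_m[t]$ containing an element with unit leading coefficient) then yields an element $g \in \langle v_1,v_2\rangle \cap R_m \setminus mR_m$, from which the explicit unimodular matrix $U = \left(\begin{smallmatrix} r_1 & r_2 \\ -v_2 & v_1\end{smallmatrix}\right)$ with $\det U = g$ is built, and the ZLP structure forces $p \mid \alpha F'$, giving $UF' = \mathrm{diag}(1,p)G$ with $G$ invertible over $R_m[x_n]$. None of this is recoverable from ``elementary row and column operations'' plus $I_1(F)=A$ alone; in particular, without the coordinate change you have no unit leading coefficient to anchor Lam's lemma, and without the lifted ZLP factorization you have no vector whose leading coefficient to normalize. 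The proposal is therefore incomplete precisely at the point that makes the theorem nontrivial.
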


\begin{proof}
Since $I_1(F) = A$, we have ${\rm rank}(\overline{F}) = 1$ and $I_1(\overline{F}) = \overline{A}$.
By Theorem 3.2 in \cite{Wang2004}, $\overline{F}$ has a ZLP factorization $\overline{F} = G_1G_2$ with $G_2$ being ZLP.
It's clear that $G_1$ is ZRP.
There exist $F_1 \in A^{2 \times 1}, F_2 \in A^{1 \times 2}$ such that $\overline{F_1} = G_1, \overline{F_2} = G_2$ and
$p$ doesn't divide all coefficients of all the elements of $F_1$ and $F_2$ in varibles $x_2, \dots, x_n$.
Let $F_1 = (u_1, u_2)^t, F_2 = (u_3, u_4)$.
Since $\overline{F_1}$ is ZRP, we have $\langle u_1, u_2, p \rangle = A$.
Let $\theta$ be the $K[x_1]-$algebra automorphism induced by
$x_1 \mapsto x_1, x_2 \mapsto x_2 + x_n^{r_2}, \dots x_{n - 1} \mapsto x_{n - 1} + x_n^{r_{n - 1}}, x_n \mapsto x_n$.
Let the leading coefficient of $\theta(u_1)$ in $x_n$ be $f(x_1)$.
Then $p \nmid f(x_1)$.
Thus, $p, f(x_1)$ are coprime.
Let $R = K[x_1, \dots, x_{n - 1}], t = x_n$.
Let $m$ be a maximal ideal of $R$.
If $p \notin m$, then $F$ is invertible over $R_m[t]$.
Thus, $F \sim I_2 \sim F(0)$ over $R_m[t]$.
Now suppose $p \in m$.
Since $p, f(x_1)$ are coprime, we obtain $f(x_1) \notin m$.
Let $v_1 = \theta(u_1), v_2 = \theta(u_2)$.
since $\theta(p) = p$ and $\langle u_1, u_2, p \rangle = A$, we have $\langle v_1, v_2, p \rangle = A$.
Let $I = \langle u_1, u_2 \rangle$.
Since $p \in m$, we have $I + m[t] = R[t]$.
Let $I' = \langle v_1, v_2 \rangle$ over $R_m[t]$.
Then $I' + (mR_m)[t] = R_m[t]$.
The leading coefficient of $v_1$ in $t$ is $f(x_1)$ which is an unit over $R_m[t]$.
By Lemma 1.1 on page 100 of \cite{Lam2006},
we have $I' \cap R_m + mR_m = R_m$.
Then there exists $g \in I' \cap R_m$ such that $g \notin mR_m$.
Let $g = r_1v_1 + r_2v_2$, where $r_1, r_2 \in R_m[t]$.
Let $U =
\left(
\begin{array}{cc}
r_1 & r_2\\
-v_2 & v_1\\
\end{array}
\right)
\in R_m[t]^{2 \times 2}
$.
Then ${\rm det}U = g$.
Thus, $U$ is invertible.
Let $F' = \theta(F)$.
Let $\alpha = (-v_2, v_1)$.
Let $v_3 = \theta(u_3), v_4 = \theta(u_4)$.
Then
$$\overline{\alpha} \overline{F'} = (-\overline{v_2}, \overline{v_1})\overline{F'}
= (-\overline{v_2}, \overline{v_1})(\overline{v_1}, \overline{v_2})^t(\overline{v_3}, \overline{v_4}) = 0
$$
Then $p | \alpha F'$.
Therefore, there exists $G \in R_m[t]^{2 \times 2}$ such that
$UF' = {\rm diag}(1, p)G$.
Since ${\rm det}F' = p$, we have ${\rm det}G = g$.
Then $G$ is invertible.
Thus, $F' \sim {\rm diag}(1, p)$ over $R_m[t]$.
It's clear that $I_1(F') = R[t]$.
Then $I_1(F') = R_m[t]$ over $R_m[t]$.
Since $R_m$ is a local ring,
we have $F'(0) \sim {\rm diag}(1, p)$ over $R_m[t]$.
Then $F' \sim F'(0)$ over $R_m[t]$.
By theorem \ref{Thm2},
$F' \sim F'(0)$ over $R[t]$.
By induction on $n$, we have
$F' \sim {\rm diag}(1, p)$.
By lemma \ref{Lem2}, we have $F \sim {\rm diag}(1, p)$.
\end{proof}

We immediately obtain the following corollary by lemma 3.9 in \cite{Guan2024}

\begin{corollary} \label{Cor1}
Let $B = {\rm diag}(p^{s_1}, \dots, p^{s_k}, p^s, \dots, p^s) \cdot U \cdot
{\rm diag}(\underbrace{1, \dots , 1}_k, p, \dots, p) \in A^{l \times l}$,
where $s_1 \leq \cdots \leq s_k \leq s, p \in K[x_1]$ is an irreducible polynomial
and $U \in {\rm GL}_l(A)$.
If $d_i(B) = p^{s_1 + \cdots + s_i}$ and $J_i(B) = A$ for $i = 1, \dots, k$,
then
$$
B \sim {\rm diag}(p^{s_1}, \dots, p^{s_k}, p^{s + 1}, \dots, p^{s + 1})
$$
\end{corollary}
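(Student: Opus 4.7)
My plan is to deduce the corollary as a one-line consequence of [Guan2024, Lemma 3.9] together with Theorem \ref{Thm3}, since the structural hypotheses of the corollary have been tailored precisely to match those of that external lemma. The decomposition $B = D_1 U D_2$ with $D_1 = {\rm diag}(p^{s_1}, \dots, p^{s_k}, p^s, \dots, p^s)$ and $D_2 = {\rm diag}(1, \dots, 1, p, \dots, p)$ (with $k$ leading ones), combined with the conditions $d_i(B) = p^{s_1 + \cdots + s_i}$ and $J_i(B) = A$ for $i = 1, \dots, k$, are exactly the input shape that Lemma 3.9 of [Guan2024] is designed to consume.

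The first step would be to recall the content of [Guan2024, Lemma 3.9]: it is a reduction/induction result that turns such a block hypothesis into the Smith-type equivalence $B \sim {\rm diag}(p^{s_1}, \dots, p^{s_k}, p^{s+1}, \dots, p^{s+1})$, provided one has the $2 \times 2$ base case available, namely that every $F \in A^{2 \times 2}$ with $\det(F) = p$ and $I_1(F) = A$ satisfies $F \sim {\rm diag}(1, p)$. In [Guan2024] this base case was precisely the content of Problem \ref{Pro1} and was left open in full generality, so Lemma 3.9 there could be applied only under extra restrictions on $p$.

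The second and final step is to observe that Theorem \ref{Thm3}, which has just been proved, supplies exactly this missing $2 \times 2$ statement for an arbitrary irreducible $p \in K[x_1]$. Feeding Theorem \ref{Thm3} into the proof of [Guan2024, Lemma 3.9] lifts the earlier restriction and yields the equivalence claimed in the corollary.

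Because the corollary is obtained by direct citation, there is no hard computational step. The main (modest) point to watch is that the precise $2 \times 2$ statement invoked inside the proof of [Guan2024, Lemma 3.9] matches Theorem \ref{Thm3} both in hypotheses ($\det F = p$ irreducible in $K[x_1]$, $I_1(F) = A$) and in conclusion ($F \sim {\rm diag}(1, p)$); once that alignment is verified, the corollary follows immediately, which is also what the phrase \emph{immediately obtain} in the statement signals.
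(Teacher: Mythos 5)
Your proposal matches the paper's approach exactly: the paper offers no proof beyond the single phrase ``We immediately obtain the following corollary by lemma 3.9 in \cite{Guan2024}'', which is precisely the direct citation you propose. Your further remark that Theorem~\ref{Thm3} supplies the previously-missing $2 \times 2$ base case is a reasonable reading of why that citation is now available, consistent with the paper's framing of Theorem~\ref{Thm3} as resolving the open Problem~\ref{Pro1} inherited from \cite{Guan2024}.
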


\begin{lemma} \label{Lem3}
Let $F \in A^{2 \times 2}$.
If ${\rm rank}(\overline{F}) = 1$ and $J_1(\overline{F}) = \overline{A}$,
then there exists $U \in {\rm GL}_2(A)$ such that
$$
UF = {\rm diag}(1, p)G
$$
for some $G \in A^{2 \times 2}$.
\end{lemma}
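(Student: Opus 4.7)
The plan is to exploit the ZLP factorization of $\overline{F}$ modulo $p$ to extract a generator of the left kernel of $\overline{F}$, then lift this generator to a unimodular row $\alpha \in A^{1 \times 2}$ that will serve as the second row of the desired $U$. Since $\overline{F}$ has rank one and $J_1(\overline{F}) = \overline{A}$, I would first apply Theorem~3.2 of \cite{Wang2004} to obtain a factorization $\overline{F} = G_1 G_2$ with $G_2 \in \overline{A}^{1 \times 2}$ a ZLP row. Writing $\overline{g} = d_1(\overline{F})$ and $G_1 = \overline{g} H_1$, the hypothesis $J_1(\overline{F}) = \overline{A}$ forces $H_1 = (\overline{h_1}, \overline{h_2})^t$ to be ZRP. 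Choosing lifts $h_1, h_2 \in A$ yields $\langle h_1, h_2, p \rangle = A$, hence a Bezout identity $e_1 h_1 + e_2 h_2 + e_3 p = 1$. Because $\overline{A}$ is a domain, the left kernel of $\overline{F}$ is the free rank-one $\overline{A}$-module generated by the ZLP row $(-\overline{h_2}, \overline{h_1})$; in particular, $(-h_2, h_1) F \equiv 0 \pmod p$.

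The main obstacle is to promote $(-h_2, h_1)$ to a row that is \emph{unimodular} over $A$. I would seek a perturbation $\alpha = (-h_2, h_1) + p(c_1, c_2)$ with $c_1, c_2 \in A$, which automatically still satisfies $\alpha F \equiv 0 \pmod p$, and then choose $c_1, c_2$ so that $\langle \alpha_1, \alpha_2 \rangle = A$. Establishing the existence of such $c_1, c_2$ is the hard step, since naive lifts of ZLP rows from $\overline{A}$ to $A$ need not be unimodular in $A$. I expect to resolve it by combining the Bezout identity above with the Quillen--Suslin property of the polynomial ring $A$ (every finitely generated projective $A$-module is free), which together should force the coset $(-h_2, h_1) + pA^{1 \times 2}$ to contain a unimodular row.

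Once such $\alpha = (\alpha_1, \alpha_2)$ is secured, a further Bezout step produces $u_1, u_2 \in A$ with $u_1 \alpha_2 - u_2 \alpha_1 = 1$; then $U = \begin{pmatrix} u_1 & u_2 \\ \alpha_1 & \alpha_2 \end{pmatrix}$ lies in ${\rm GL}_2(A)$, and by construction the second row of $UF$ equals $\alpha F = p \beta$ for some $\beta \in A^{1 \times 2}$. Therefore $UF = {\rm diag}(1, p) G$ with $G = \begin{pmatrix} (u_1, u_2) F \\ \beta \end{pmatrix}$, as required.
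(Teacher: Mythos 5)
The proposal is structurally sound up to one crucial step, but that step has a genuine gap that is not merely a matter of filling in details. You correctly extract a ZLP/ZRP factorization of $\overline{F}$ and correctly observe that any lift $\alpha$ of $(-\overline{h_2},\overline{h_1})$ satisfies $\alpha F \equiv 0 \pmod p$, and that if such an $\alpha$ can be chosen unimodular then completing it to $U\in {\rm GL}_2(A)$ finishes the proof. The gap is the assertion that the coset $(-h_2,h_1)+pA^{1\times 2}$ contains a unimodular row. What you need here is precisely the statement that for the unimodular triple $(h_1,h_2,p)$ there exist $c_1,c_2$ with $\langle h_1+pc_1,\,h_2+pc_2\rangle=A$; this is the stable-range-two condition for this particular row. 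It does not follow from Quillen--Suslin (which only tells you that $(h_1,h_2,p)$ is completable to a matrix in ${\rm GL}_3(A)$, or that stably free modules are free) combined with a Bezout identity. In fact the stable rank of $K[x_1,\dots,x_n]$ is in general $n+1$, so for $n\geq 2$ one cannot expect to shorten an arbitrary unimodular row of length $3$ by a transvection on the last entry. If the claim is true here, it must be because of the special shape $p\in K[x_1]$ and the ZLP origin of $(h_1,h_2)$, and you have not supplied an argument exploiting that.

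For comparison, the paper sidesteps this lifting problem entirely. It shows that the module $L:p$, with $L$ the row space of $F$, is locally free at every maximal ideal (splitting into cases according to whether $p$ lies in the maximal ideal), hence projective, hence free by Quillen--Suslin; this verifies the hypothesis of Wang's factorization theorem (Theorem~3.1 of \cite{Wang2007}), which yields a factorization $F=F_1F_2$ with $\det(F_1)=p$. It then applies Theorem~\ref{Thm3} to $F_1$ to get $UF_1V={\rm diag}(1,p)$, and the desired factorization follows. The degenerate case $\det(F)=0$ is handled by a perturbation $a\mapsto a+p$. So the paper's route uses Quillen--Suslin at the level of \emph{modules} (to certify that $L:p$ is free, feeding into Wang's criterion), rather than at the level of \emph{unimodular rows} as your proposal attempts. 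To rescue your approach you would need an independent lemma establishing the row-lifting in this specific situation; absent that, the argument does not close.
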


\begin{proof}
Let $F =
\left(
\begin{array}{cc}
a & b\\
c & d\\
\end{array}
\right)$.

Now, we assume that ${\rm det}(F) \neq 0$.
Let $L = {\rm rowspace}(F)$.
Let $m$ be a maximal ideal in $A$.
If $p \notin m$, then $p$ is invertible in $A_m$.
Thus, $(L : p)_m = L_m : p = L_m$ is free.
Now suppose $p \in m$.
Since ${\rm rank}(\overline{F}) = 1$, we have $p | {\rm det}(F)$.
Let ${\rm det}(F) = ph$.
Since $J_1(\overline{F}) = \overline{A}$, we have $\langle a, b, c, d, p\rangle = A$.
Then at least one of $a, b, c, d \notin m$.
Without loss of generality, assume that $a \notin m$.
Then $a$ is invertible in $A_m$.
We have $F \sim F_1 :=
\left(
\begin{array}{cc}
1 & b_1\\
0 & ph\\
\end{array}
\right)$
over $A_m$ for some $b_1 \in A_m$.
It's clear that $L_m = {\rm rowspace}_{A_m}(F_1)$.
Let $\alpha = (a_1, a_2) \in L_m : p$.
Then $p\alpha \in L_m$.
Let $p\alpha = k_1(1, b_1) + k_2(0, ph)$, where $k_1, k_2 \in A_m$.
Then $p\alpha = (k_1, k_1b_1 + k_2ph) = (pa_1, pa_2)$.
Thus, $pa_1 = k_1$, and $pa_2 = k_1b_1 +k_2ph = pa_1b_1 + k_2ph$.
Then $a_2 = a_1b_1 + k_2h$.
Therefore, $(a_1, a_2) = (a_1, a_1b_1 + k_2h) = a_1(1, b_1) + k_2(0, h)$.
Let $M$ be the submodule of $A_m^2$ generated by $(1, b_1)$ and $(0, h)$.
Then $\alpha \in M$.
Thus, $L_m : p \subseteq M$.
It's clear that $p(1, b_1)$ and $p(0, h) \in L_m$.
Then $M \subseteq L_m : p$.
Therefore, $M = L_m : p$.
It's clear that $(1, b_1), (0, h)$ is a free basis of $M$.
Then $L_m : p$ is free.
Then $L : p$ is projective by Corollary 3.4 on page 19 in \cite{Lam2006}.
By Quillen-Suslin Theorem, $L : p$ is a free module of rank 2.
All the $2 \times 2$ minors of $(pI_2, F)$ are
$p^2, ph, pc, pd, -pa, -pb$.
Since $\langle a, b, c, d, p\rangle = A$, we have $d_2(pI_2, F) = p$.
Then $p$ is regular with respect to $F$ according to definition 2.2 in \cite{Wang2007}.
By theorem 3.1 in \cite{Wang2007}, we have $F = F_1F_2$ for some $F_1$ and $F_2 \in A^{2 \times 2}$ with ${\rm det}(F_1) = p$.
Since ${\rm det}(F_1) = p$, we have $p \in I_1(F_1)$.
Since $I_1(F) \subseteq I_1(F_1)$ and $\langle a, b, c, d, p\rangle = A$, we have $I_1(F_1) = A$.
By theorem \ref{Thm3}, there exist $U, V \in {\rm GL}_2(A)$ such that $UF_1V = {\rm diag}(1, p)$.
Then $UF = UF_1F_2 = {\rm diag}(1, p)V^{-1}F_2$.

Now, suppose that ${\rm det}(F) = 0$.
Note that at least one of $a, b, c, d \neq 0$.
Without loss of generality, assume that $d \neq 0$.
Let $F_1 =
\left(
\begin{array}{cc}
a + p & b\\
c & d\\
\end{array}
\right)$.
Then ${\rm det}(F_1) = pd \neq 0$.
It's clear that $\overline{F_1} = \overline{F}$.
Then ${\rm rank}(\overline{F_1}) = 1$ and $J_1(\overline{F_1}) = \overline{A}$.
By the above proof, there exists $U \in {\rm GL}_2(A), G \in A^{2 \times 2}$ such that
$$
UF_1 = {\rm diag}(1, p)G.
$$
Let $U =
\left(
\begin{array}{cc}
u_{11} & u_{12}\\
u_{21} & u_{22}\\
\end{array}
\right)$.
Then $UF_1 = UF + U
\left(
\begin{array}{cc}
p & 0\\
0 & 0\\
\end{array}
\right)
= UF +
\left(
\begin{array}{cc}
u_{11}p & 0\\
u_{21}p & 0\\
\end{array}
\right)
$.
Then $UF = {\rm diag}(1, p)(G -
\left(
\begin{array}{cc}
u_{11}p & 0\\
u_{21} & 0\\
\end{array}
\right)
)$.
\end{proof}

Combining the above lemma and corollary 3.2 in \cite{Guan2024}, we obtain the following lemma.

\begin{lemma} \label{Lem4}
Let $F \in A^{l \times l}$.
If ${\rm rank}(\overline{F}) = r$ and $J_r(\overline{F}) = \overline{A}$,
then there exists $U \in {\rm GL}_l(A)$ such that
$$
UF = {\rm diag}(\underbrace{1, \dots ,1}_r, p, \dots, p)G
$$
for some $G \in A^{l \times l}$.
\end{lemma}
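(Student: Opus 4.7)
The plan is to combine Lemma \ref{Lem3} with Corollary 3.2 of \cite{Guan2024}, in exactly the style by which the authors derive Corollary \ref{Cor1}. First I would dispatch the extremal cases. When $r = l$, the diagonal matrix in the conclusion is $I_l$, so $U = I_l$ and $G = F$ suffice. When $r = 0$, the hypothesis forces $\overline{F} = 0$, so $p$ divides every entry of $F$, and $U = I_l$ together with $G = F/p$ delivers the claimed factorization.

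For the nontrivial range $0 < r < l$, I would invoke Corollary 3.2 of \cite{Guan2024}. I expect that corollary to be a bootstrapping statement of the form: if the $2 \times 2$, rank-$1$ case of the factorization ($UF = \mathrm{diag}(1, p) G$ under $\mathrm{rank}(\overline{F}) = 1$ and $J_1(\overline{F}) = \overline{A}$) is known, then the general $l \times l$, rank-$r$ case of $UF = \mathrm{diag}(1, \dots, 1, p, \dots, p) G$ under $\mathrm{rank}(\overline{F}) = r$ and $J_r(\overline{F}) = \overline{A}$ also follows. The required $2 \times 2$ input is exactly the content of Lemma \ref{Lem3}, so substituting that lemma into Corollary 3.2 yields the conclusion essentially in one line, which matches the tone of the introductory sentence ``Combining the above lemma and corollary 3.2 in \cite{Guan2024}.''

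The main obstacle I anticipate is bookkeeping alone: verifying that the ZLP-type factorization data extracted from $\overline{F}$ via the rank and $J_r(\overline{F}) = \overline{A}$ conditions can be successively fed into the $2 \times 2$ engine of Lemma \ref{Lem3} in the manner Corollary 3.2 prescribes, and that the resulting unimodular matrices over $A$ compose cleanly (i.e., that the hypothesis ``$\mathrm{rank}(\overline{F}) = r$ and $J_r(\overline{F}) = \overline{A}$'' in Corollary 3.2 maps verbatim onto the hypothesis available here). Because the paper phrases this step as a one-line citation, I expect the hypotheses to match directly, and the proof to reduce to a single invocation with no additional manipulation required.
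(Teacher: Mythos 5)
Your proposal matches the paper's approach exactly: the paper gives no detailed proof, only the single-line statement that Lemma~\ref{Lem3} combined with Corollary~3.2 of \cite{Guan2024} yields the result, which is precisely the combination you describe. Your dispatch of the extremal cases $r=0$ and $r=l$ and your reading of Corollary~3.2 as the $2\times 2\ \to\ l\times l$ bootstrapping engine are consistent with how the paper uses that corollary elsewhere (e.g.\ to derive Corollary~\ref{Cor1}), so the proposal is correct and in the same spirit as the paper's own derivation.
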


The proof of the following lemma is almost the same as that of lemma 3.10 in \cite{Guan2024}.
Hence, the proof is omitted here.

\begin{lemma} \label{Lem5}
Let $F \in A^{l \times l}$ with ${\rm det}(F) = p^t$,
where $p \in K[x_1]$ is an irreducible polynomial and $t$ is a positive integer.
Assume that the Smith form of $F$ is ${\rm diag}(p^{s_1}, \dots, p^{s_l})$ with $s_1 \leq \cdots \leq s_l$.
If $F \sim {\rm diag}(p^{s_1}, \dots, p^{s_{k}}, p^s, \dots, p^s) \cdot G$
for some $G \in A^{l \times l}$,
where $s_k \leq s < s_{k + 1}$,
and $J_i(F) = A$ for $i = 1, \dots, k$,
then there exists
$U \in {\rm GL}_l(A)$ such that
$UG = {\rm diag}(\underbrace{1, \dots ,1}_k, p, \dots, p) \cdot G_1$ for some $G_1 \in A^{l \times l}$.
\end{lemma}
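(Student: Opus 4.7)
The plan is to verify the two hypotheses of Lemma \ref{Lem4} for $G$, namely ${\rm rank}(\overline{G}) = k$ and $J_k(\overline{G}) = \overline{A}$, and then invoke that lemma to obtain the desired factorization $UG = {\rm diag}(\underbrace{1, \ldots, 1}_k, p, \ldots, p) G_1$. Since both the hypotheses and the conclusion depend only on the equivalence class of $F$, I first replace $F$ by its Smith form $S = {\rm diag}(p^{s_1}, \ldots, p^{s_l})$, so that $F \sim DG$ is rewritten as $DG = USV$ for some $U, V \in {\rm GL}_l(A)$, where $D = {\rm diag}(p^{s_1}, \ldots, p^{s_k}, p^s, \ldots, p^s)$.

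For the rank upper bound I write $G = HV$ with $H = D^{-1}US$ interpreted entrywise, so that $H_{ji} = U_{ji}\, p^{s_i - d_j}$, where $d_j = s_j$ for $j \le k$ and $d_j = s$ for $j > k$. The condition $G \in A^{l \times l}$ forces $p^{d_j - s_i}$ to divide $U_{ji}$ whenever $d_j > s_i$, so each $H_{ji}$ actually lies in $A$. Moreover, for every $j$ and every $i > k$ one has $s_i \ge s_{k+1} > s \ge d_j$, hence $p \mid H_{ji}$; thus the last $l - k$ columns of $\overline{H}$ vanish, and since $\overline{V}$ is invertible over $\overline{A}$, ${\rm rank}(\overline{G}) = {\rm rank}(\overline{H}) \le k$.

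For the matching lower bound and the $J_k$ statement I apply the Cauchy--Binet formula to the diagonal matrix $D$: each $k \times k$ minor of $DG$ equals $p^{\sigma_R} m_{R,C}(G)$, where $R$ and $C$ are $k$-element index sets, $m_{R,C}(G)$ is the corresponding minor of $G$, and $\sigma_R = \sum_{i \in R,\, i \le k} s_i + s \cdot |\{i \in R : i > k\}|$. Because $s_k \le s < s_{k+1}$, the minimum of $\sigma_R$ over all $R$ with $|R| = k$ equals $s_1 + \cdots + s_k$. The hypothesis $d_k(F) = p^{s_1 + \cdots + s_k}$ combined with $J_k(F) = A$ gives $I_k(F) = (p^{s_1 + \cdots + s_k})$; equating this with $I_k(DG)$ and dividing out the common factor $p^{s_1 + \cdots + s_k}$ produces $I^* + (p) = A$, where $I^*$ is the ideal generated by those $m_{R, C}(G)$ for which $\sigma_R$ attains its minimum. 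Reducing modulo $p$ gives $I_k(\overline{G}) \supseteq \overline{I^*} = \overline{A}$; consequently $d_k(\overline{G})$ is a unit, $J_k(\overline{G}) = I_k(\overline{G}) = \overline{A}$, and ${\rm rank}(\overline{G}) \ge k$. Combined with the upper bound this gives ${\rm rank}(\overline{G}) = k$, and Lemma \ref{Lem4} yields the required $U$ and $G_1$.

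The main obstacle I expect is the rank upper bound: a direct Cauchy--Binet estimate for $(k+1) \times (k+1)$ minors of $G$ does not in general force their reductions modulo $p$ to vanish. The missing $p$-divisibility comes only from the row-by-row constraints that the identity $DG = USV$ together with $G \in A^{l \times l}$ imposes on $U$, which is precisely what the column analysis of $\overline{H}$ exploits.
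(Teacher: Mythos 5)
Your reduction of Lemma \ref{Lem5} to Lemma \ref{Lem4} (showing $\mathrm{rank}(\overline{G}) = k$ and $J_k(\overline{G}) = \overline{A}$) is the right target, and your Cauchy--Binet argument for the lower bound and for $J_k(\overline{G}) = \overline{A}$ is correct: since $I_k$ is an equivalence invariant, $I_k(DG) = I_k(F) = \langle p^{s_1+\cdots+s_k}\rangle$, and the minors of $DG$ attaining the minimal $\sigma_R = s_1 + \cdots + s_k$ give $k \times k$ minors of $G$ that generate the unit ideal modulo $p$.

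However, the step ``replace $F$ by its Smith form $S$, so that $F \sim DG$ becomes $DG = USV$ with $U, V \in \mathrm{GL}_l(A)$'' is a genuine gap. In this multivariate setting, the Smith form of Definition \ref{def:1.5} is defined purely from the $d_i(F)$'s, and it is \emph{not} given that $F$ is equivalent to it over $A$ --- indeed that is essentially the conclusion of Theorem \ref{Thm4}, the very result that Lemma \ref{Lem5} is a step toward. So you are implicitly assuming what is ultimately to be proved. Your entire argument for the rank upper bound $\mathrm{rank}(\overline{G}) \le k$ rests on the matrix $H = D^{-1}US$, which does not exist over $A$ without this unjustified equivalence. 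As you yourself note, the upper bound is the genuinely delicate point, and it cannot be recovered from the $(k+1)\times(k+1)$ Cauchy--Binet divisibility constraints alone, because $\sigma_R$ for $|R| = k+1$ can exceed $s_1 + \cdots + s_{k+1}$, in which case no $p$-divisibility of $m_{R,C}(G)$ follows.

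The fix is to localize at the prime $(p)$: the local ring $A_{(p)}$ is a DVR, hence a PID, and over a PID the equivalence class is determined by the $d_i$'s, so $F \sim S$ does hold over $A_{(p)}$ (with $U, V \in \mathrm{GL}_l(A_{(p)})$). Then $H = D^{-1}US = GV^{-1} \in A_{(p)}^{l\times l}$, the entrywise analysis of $H_{ji} = U_{ji}p^{s_i - d_j}$ shows $p \mid H_{ji}$ for all $i > k$, so the last $l - k$ columns of $H$ vanish modulo $pA_{(p)}$, giving rank $\le k$ for the image of $G$ over the residue field $\mathrm{Frac}(\overline{A})$, which equals $\mathrm{rank}(\overline{G})$ since $\overline{A}$ is a domain. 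Without this localization (or some substitute for it), the proposal as written does not establish the key rank bound.
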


\begin{theorem} \label{Thm4}
Let $F \in A^{l \times l}$ with ${\rm det}(F) = p^t$,
where $p \in K[x_1]$ is irreducible and $t$ is a positive integer.
Assume that the Smith form of $F$ is ${\rm diag}(p^{s_1}, \dots, p^{s_l})$
with $s_1 \leq \cdots \leq s_l$.
The following are equivalent:
\begin{itemize}
\item[(1)] $F$ is equivalent to its Smith form;
\item[(2)] $J_i(F) = A$ for $i = 1, \dots, l$.
\end{itemize}
\end{theorem}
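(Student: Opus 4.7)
The plan is to handle the two implications separately, with $(2)\Rightarrow(1)$ being the substantial direction.

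For $(1)\Rightarrow(2)$, I will use that the reduced-minor ideal $J_i$ is invariant under equivalence: both $I_i$ (the ideal generated by $i\times i$ minors) and $d_i$ (their gcd) are preserved by unimodular left/right multiplication, hence so is $J_i$. For the Smith form $S_F=\mathrm{diag}(p^{s_1},\dots,p^{s_l})$, the leading principal $i\times i$ minor is $p^{s_1+\cdots+s_i}=d_i(S_F)$, so the corresponding reduced minor is $1$ and $J_i(S_F)=A$. Invariance then gives $J_i(F)=A$ for every $i$.

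For $(2)\Rightarrow(1)$, I would argue by induction on $s_l$. The degenerate case $s_l=0$ (ruled out by $t\geq 1$) is anyway trivial, since then $F$ is unimodular. For the inductive step, let $k$ be the largest index with $s_k=0$, so that $s_{k+1}\geq 1$. From $J_i(F)=A$ together with $d_i(F)=p^{s_1+\cdots+s_i}$, the ideal $I_i(F)$ equals $(p^{s_1+\cdots+s_i})$; reducing modulo $p$ yields $\mathrm{rank}(\overline{F})=k$ and $J_k(\overline{F})=\overline{A}$, so Lemma~\ref{Lem4} produces
\[
F \;\sim\; D_0 \cdot G \qquad \text{with}\qquad D_0=\mathrm{diag}(\underbrace{1,\ldots,1}_{k},\,p,\ldots,p).
\]

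I would then iterate Lemma~\ref{Lem5} and Corollary~\ref{Cor1} in alternation to bump the exponents on the last $l-k$ diagonal entries. Schematically, at a general stage one has $F\sim \mathrm{diag}(p^{s_1},\dots,p^{s_k},\underbrace{p^s,\dots,p^s}_{l-k})\cdot G_s$, starting with $s=1$. Whenever $s<s_{k+1}$, Lemma~\ref{Lem5} rewrites $G_s=U^{-1}D_0\,G_s'$ for some unimodular $U$, and Corollary~\ref{Cor1} shows that the compound factor
\[
B \;:=\; \mathrm{diag}(p^{s_1},\ldots,p^{s_k},p^s,\ldots,p^s)\cdot U^{-1}\cdot D_0
\]
is equivalent to $\mathrm{diag}(p^{s_1},\ldots,p^{s_k},p^{s+1},\ldots,p^{s+1})$; absorbing the resulting unimodular matrices bumps $s$ to $s+1$. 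When $s$ reaches $s_{k+1}$, I would update $k$ to the largest $k'$ with $s_{k'}=s$, re-interpret the factorization accordingly, and restart the bumping. After finitely many cycles the factorization becomes $F\sim S_F\cdot G^{\ast}$, and comparing determinants forces $G^{\ast}$ to be unimodular, so $F\sim S_F$.

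The principal obstacle will be verifying, at each iteration, the hypotheses $d_i(B)=p^{s_1+\cdots+s_i}$ and $J_i(B)=A$ for $i\leq k$ of Corollary~\ref{Cor1}. The relation $F\sim B\cdot G_s'$ does not let one transfer $J_i(F)=A$ to $B$ directly, since Cauchy--Binet only gives $I_i(BG_s')\subseteq I_i(B)\cdot I_i(G_s')$. I would therefore compute the minors of $B$ explicitly from the diagonal exponents and the minors of $U^{-1}$, exploiting that any $i$-row subblock of the unimodular $U^{-1}$ is ZLP and that the specific $U$ furnished by Lemma~\ref{Lem5} can be arranged to align with the row structure of $B$, so that the minimal-exponent minor of $B$ supplies the required equality for $d_i(B)$ and unit-generation for $J_i(B)$.
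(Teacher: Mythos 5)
Your overall strategy for $(2)\Rightarrow(1)$ --- obtain a factorization $F\sim\mathrm{diag}(p^{s_1},\dots,p^{s_k},p^s,\dots,p^s)\cdot G$ and alternately apply Lemma~\ref{Lem5} and Corollary~\ref{Cor1} to bump the tail exponents --- is exactly the paper's strategy. The only cosmetic difference is at the start: you invoke Lemma~\ref{Lem4} directly to jump to the stage $s=1$, whereas the paper begins from $F = p^{s_1}G_1$ (extracting $d_1(F)$) and lets the same iteration absorb the initial steps. (Also, the ``induction on $s_l$'' framing is not really how the argument then proceeds; it is a finite iteration, which is fine but should be labeled as such.)

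The genuine gap is precisely the one you flag and then wave at in your last paragraph: before each application of Corollary~\ref{Cor1} you must establish $d_i(B)=p^{s_1+\cdots+s_i}$ and $J_i(B)=A$ for $i\le k$, where
\[
B \;=\; \mathrm{diag}(p^{s_1},\dots,p^{s_k},p^s,\dots,p^s)\cdot U^{-1}\cdot
\mathrm{diag}(\underbrace{1,\dots,1}_{k},p,\dots,p).
\]
The paper does not re-derive these from $J_i(F)=A$ and does not compute the minors from scratch; it quotes Lemma~3.7 of \cite{Guan2024}, a standalone result about exactly such a sandwich of a unimodular matrix between two diagonal $p$-power matrices. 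Your proposed substitute does not hold up as stated: Lemma~\ref{Lem5} gives no control over the shape of $U$, so ``arranging $U$ to align with the row structure of $B$'' is not available; and the naive Cauchy--Binet computation (take $I=J=\{1,\dots,i\}$ to minimize the $p$-contribution from the diagonal factors) fails because the $i\times i$ minors of $V=U^{-1}$ lying in the first $k$ columns need not generate the unit ideal --- the rows of a unimodular matrix are ZLP over all $l$ columns, not over a fixed $k$-column subblock. So the unit-generation for $J_i(B)$ requires mixing in column sets $J$ that reach past column $k$, and keeping track of the extra $p$-factors this introduces is the actual content that must be proved. You should either cite Lemma~3.7 of \cite{Guan2024}, or supply a careful Cauchy--Binet argument for the three-factor product; as written, this step is missing.
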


\begin{proof}
(2) $\Rightarrow$ (1). It's trivial.

(2) $\Rightarrow$ (1). Since $d_1(F) = p^{s_1}$,
we have $F = {\rm diag}(p^{s_1}, \dots, p^{s_1}) \cdot G_1$ for some $G_1 \in R^{l \times l}$.
If $s_2 = s_1$, then $F = {\rm diag}(p^{s_1}, p^{s_2}, \dots, p^{s_2}) \cdot G_2$,
where $G_2 = G_1$.
If $s_2 > s_1$, since $J_1(F) = A$, by lemma \ref{Lem5},
we have
$G_1 = U_{21} \cdot {\rm diag}(1, p, \dots, p) \cdot G_{21}$,
where $U_{21} \in {\rm GL}_l(A)$ and $G_{21} \in A^{l \times l}$.
Let $B_{11} = {\rm diag}(p^{s_1}, p^{s_1}, \dots, p^{s_1}) \cdot U_{21} \cdot {\rm diag}(1, p, \dots, p)$.
By lemma 3.7 in \cite{Guan2024}, we have $d_1(B_{11}) = p^{s_1}$ and $J_1(B_{11}) = A$.
By corollary 3.1,
there exist $U_{22}, V_{22} \in {\rm GL}_l(A)$ such that
$$
B_{11}
= U_{22} \cdot {\rm diag}(p^{s_1}, p^{s_1 + 1}, \dots, p^{s_1 + 1}) \cdot V_{22}
$$
Let $G_{22} = V_{22}G_{21}$.
Then $F \sim {\rm diag}(p^{s_1}, p^{s_1 + 1}, \dots, p^{s_1 + 1}) \cdot G_{22}$.
Repeat this process for $s_2 - s_1$ times.
We have $F \sim {\rm diag}(p^{s_1}, p^{s_2}, \dots, p^{s_2}) \cdot G_2$.
Similarly, we can obtain $F \sim {\rm diag}(p^{s_1}, \dots, p^{s_l})G_l$.
It's clear that $G_l$ is a unimodular matrix.
Therefore, $F \sim {\rm diag}(p^{s_1}, \dots, p^{s_l})$.
\end{proof}

The proof of the following lemma is similar with that of lemma 3.1 in \cite{Guan2024}.
Hence, the proof is omitted here.

\begin{lemma} \label{Lem6}
Let $F \in A^{l \times m}$, and ${\rm rank}(F) = r > 0$.
If $J_r(F) = A$, then there exist $U \in {\rm GL}_l(A), V \in {\rm GL}_m(A)$ such that $UFV =
\left(
\begin{array}{cc}
G & 0_{r \times (m - r)}\\
0_{(l - r) \times r} & 0_{(l - r) \times (m - r)}\\
\end{array}
\right)
$,
where $G \in A^{ r \times r}$.
\end{lemma}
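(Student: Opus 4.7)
My plan mirrors the strategy used in the proof of Theorem~\ref{Thm3}: clear columns first, then rows, each step by pairing a ZLP factorization with a Quillen--Suslin completion.

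First I would invoke Theorem~3.2 of \cite{Wang2004} at the rank-$r$ level: since ${\rm rank}(F) = r$ and $J_r(F) = A$, this produces a factorization $F = F_1 F_2$ with $F_1 \in A^{l \times r}$ and $F_2 \in A^{r \times m}$ a ZLP matrix. The Quillen--Suslin theorem lets me complete $F_2$ to a unimodular matrix, furnishing $V \in {\rm GL}_m(A)$ with $F_2 V = (I_r,\ 0_{r \times (m-r)})$. Consequently
$$
FV = F_1 F_2 V = (F_1,\ 0_{l \times (m-r)}),
$$
so the last $m-r$ columns have been killed.

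For the second move I first need $J_r(F_1) = A$. A short Cauchy--Binet computation on the $r \times r$ minors of $F = F_1 F_2$, combined with the fact that the $r \times r$ minors of the ZLP matrix $F_2$ generate $A$, yields $d_r(F_1) = d_r(F)$ and $I_r(F_1) = I_r(F)$, hence $J_r(F_1) = J_r(F) = A$. Transposing, $F_1^T \in A^{r \times l}$ has rank $r$ with $J_r(F_1^T) = A$, so Theorem~3.2 of \cite{Wang2004} again gives $F_1^T = F_3 F_4$ with $F_4 \in A^{r \times l}$ ZLP; equivalently, $F_1 = F_4^T F_3^T$ with $F_4^T \in A^{l \times r}$ a ZRP matrix. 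A second Quillen--Suslin completion produces $U \in {\rm GL}_l(A)$ with
$$
U F_4^T = \begin{pmatrix} I_r \\ 0_{(l-r) \times r} \end{pmatrix},
$$
so $U F_1 = \begin{pmatrix} F_3^T \\ 0_{(l-r) \times r} \end{pmatrix}$.

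Putting the two moves together,
$$
UFV = U(F_1,\ 0_{l \times (m-r)}) = \begin{pmatrix} F_3^T & 0_{r \times (m-r)} \\ 0_{(l-r) \times r} & 0_{(l-r) \times (m-r)} \end{pmatrix},
$$
and setting $G = F_3^T \in A^{r \times r}$ delivers the stated form. The only non-mechanical step is the transfer $J_r(F) = A \Rightarrow J_r(F_1) = A$ via minor-tracking through the ZLP factorization; I expect this to be the main (though mild) obstacle, since without it the row-killing half of the argument would have no hypothesis to invoke.
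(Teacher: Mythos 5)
Your proof is correct and follows exactly the route the paper has in mind (it omits the argument, citing Lemma~3.1 of \cite{Guan2024}, whose proof is this same Lin--Bose-plus-Quillen--Suslin column-then-row reduction). The one step you flagged as the ``mild obstacle''---transferring $J_r(F)=A$ to $J_r(F_1)=A$---is indeed the only non-mechanical point, and your Cauchy--Binet argument handles it cleanly: since $F_1$ has only $r$ columns, $\det F[I,J]=\det F_1[I,\cdot]\cdot\det F_2[\cdot,J]$ with no sum, so $I_r(F)=I_r(F_1)I_r(F_2)=I_r(F_1)$, $d_r(F_2)=1$ forces $d_r(F)=d_r(F_1)$, and hence $J_r(F_1)=J_r(F)=A$; note also $\operatorname{rank}(F_1)=r$ is automatic, so the hypotheses for the second application of the Lin--Bose factorization to $F_1^T$ are met.
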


\begin{corollary} \label{Cor2}
Let $F \in A^{l \times m}$ with ${\rm rank}(F) = r > 0$ and $d_r(F) = p^t$,
where $p \in K[x_1]$ is irreducible and $t$ is a positive integer.
Assume that the Smith form of $F$ is
$
\left(
\begin{array}{cc}
{\rm diag}(p^{s_1}, \dots, p^{s_r}) & 0_{r \times (m - r)}\\
0_{(l - r) \times r} & 0_{(l - r) \times (m - r)}
\end{array}
\right)
$
with $s_1 \leq \cdots \leq s_r$.
The following are equivalent:
\begin{itemize}
\item[(1)] $F$ is equivalent to its Smith form;
\item[(2)] $J_i(F) = A$ for $i = 1, \dots, r$.
\end{itemize}
\end{corollary}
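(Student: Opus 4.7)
\noindent\textbf{Proof plan for Corollary \ref{Cor2}.}

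The direction (1) $\Rightarrow$ (2) is immediate: if $F$ is equivalent to the displayed Smith form $S$, then $I_i(F) = I_i(S)$ and $d_i(F) = d_i(S)$, and a direct inspection of $S$ (its $i \times i$ minors are the monomials $p^{s_{j_1} + \cdots + s_{j_i}}$, one of which equals $d_i(S) = p^{s_1 + \cdots + s_i}$) shows that one of the reduced $i \times i$ minors is $1$, so $J_i(F) = A$.

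For (2) $\Rightarrow$ (1), the plan is to reduce to the square case already handled by Theorem \ref{Thm4}. First I would invoke Lemma \ref{Lem6}: since $J_r(F) = A$, there exist $U \in {\rm GL}_l(A)$ and $V \in {\rm GL}_m(A)$ such that
$$
UFV =
\left(
\begin{array}{cc}
G & 0_{r \times (m - r)}\\
0_{(l - r) \times r} & 0_{(l - r) \times (m - r)}
\end{array}
\right)
$$
for some $G \in A^{r \times r}$. The next step is the bookkeeping that transfers all relevant invariants from $F$ to $G$: because the nonzero $i \times i$ minors of the block matrix above are exactly the $i \times i$ minors of $G$ for $1 \leq i \leq r$, we have $I_i(F) = I_i(G)$ and $d_i(F) = d_i(G)$, and therefore $J_i(G) = J_i(F) = A$ for $i = 1, \dots, r$. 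In particular $d_r(G) = p^t$, which forces ${\rm det}(G)$ to be a unit multiple of $p^t$, and the Smith form of $G$ is ${\rm diag}(p^{s_1}, \dots, p^{s_r})$.

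Now $G$ satisfies exactly the hypotheses of Theorem \ref{Thm4}, so $G \sim {\rm diag}(p^{s_1}, \dots, p^{s_r})$. Lifting this equivalence back through the block decomposition via $U^{-1}$ and $V^{-1}$ padded by identity blocks shows $F$ is equivalent to its Smith form. I do not foresee a serious obstacle in this argument: the only point requiring care is verifying that $\det(G)$ is forced to be a (unit times a) power of $p$ and that the Smith invariants transfer correctly, both of which are routine consequences of the block-diagonal shape together with the unimodularity of $U$ and $V$.
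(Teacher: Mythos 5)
Your proposal is correct and follows essentially the same route as the paper: reduce to the square case via Lemma \ref{Lem6}, note that $\det(G)$ and the $J_i$ invariants transfer (the paper states this more tersely as ``it's clear that $\det(G)=p^t$ and $J_i(G)=A$''), and then apply Theorem \ref{Thm4} to $G$. Your added remarks about the invariance of $I_i$ and $J_i$ under equivalence, the structure of the minors of the Smith form, and the normalization of $\det(G)$ up to a unit are all sound and merely fill in details the paper leaves implicit.
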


\begin{proof}
By lemma \ref{Lem6}, $F \sim
\left(
\begin{array}{cc}
G & 0_{r \times (m - r)}\\
0_{(l - r) \times r} & 0_{(l - r) \times (m - r)}\\
\end{array}
\right)$
for some $G \in A^{r \times r}$.
It's clear that ${\rm det}(G) = p^t$ and $J_i(G) = A$ for $i = 1, \dots, r$.
By theorem \ref{Thm4}, $G \sim {\rm diag}(p^{s_1}, \dots, p^{s_r})$.
Then $F$ is equivalent to its Smith form.
\end{proof}

\section*{References}


\end{document}